\newlength{\aufzleft}
\newenvironment{aufz}{\begin{list}{}{\setlength{\listparindent}{0pt}\setlength{\itemsep}{\topsep}\setlength{\labelwidth}{3.2ex}\setlength{\aufzleft}{\labelsep}\addtolength{\aufzleft}{\labelwidth}\setlength{\leftmargin}{\aufzleft}}}{\end{list}}
\newtheoremstyle{par}{1ex}{2ex}{\rm}{}{\bfseries}{}{0.8em}{\thmnumber{(#2)}}
\newtheoremstyle{thm}{1ex}{2ex}{\itshape}{}{\bfseries}{}{0.9em}{\thmnumber{(#2)}\thmname{ #1}\thmnote{ (#3)}}
\theoremstyle{par}
\newtheorem{no}{}[section]
\theoremstyle{thm}
\newtheorem{prop}[no]{Proposition}
\newtheorem{cor}[no]{Corollary}
\newcommand{\dfgl}{\mathrel{\mathop:}=}
\DeclareMathOperator{\spec}{Spec}
\DeclareMathOperator{\Idem}{Idem}
\DeclareMathOperator{\irr}{Irr}
\begin{document}

\title{Irreducibility and integrity of schemes}
\author{Fred Rohrer}
\address{Salamanderweg 7, 7320 Sargans, Switzerland}
\email{fredrohrer@math.ch}
\subjclass[2010]{Primary 14A15; Secondary 54D99}
\keywords{Irreducible topological space; integral scheme.}

\begin{abstract}
This is a comprehensive study of the relations between the global, local and pointwise variants of irreducibility and integrity of schemes, including examples and counterexamples, and aimed especially at learners of algebraic geometry.
\end{abstract}

\maketitle


\section*{Introduction}

In algebraic geometry, questions of the following form are ubiquitous:
\begin{aufz}
\item[] \textit{Given a property $\mathbf{P}$\,of schemes, what are the relations between having $\mathbf{P}$, having $\mathbf{P}$\,locally, and having $\mathbf{P}$\,pointwise?}
\end{aufz}
(We say that a scheme has $\mathbf{P}$ locally if every point has an open neighbourhood with $\mathbf{P}$, and that it has $\mathbf{P}$ pointwise if every stalk of the structure sheaf has $\mathbf{P}$.) Two basic but important properties of schemes are \textit{irreducibility} and \textit{integrity.} The above question for these two choices of $\mathbf{P}$ seemingly often poses problems, especially for learners of algebraic geometry, but also if one wishes to avoid unnecessary hypotheses like e.g.\ noetherianness.

The aim of this note is to comprehensively answer these two questions without making any claim of originality. None of the results in it are new. In fact, (a variant of) most of them can be found in \cite{ega}. Various of the counterexamples presented here are collected from literature and internet; they are part of folklore, albeit not all of them well-known. Thus, I hope this note will be of use for learners -- and maybe also practitioners -- of algebraic geometry.\smallskip

Concerning terminology we follow Bourbaki's \textit{\'El\'ements de math\'ema\-tique} and Grothendieck's \textit{\'El\'ements de g\'eom\'etrie alg\'ebrique}. By a \textit{space} we always mean a topological space, and by a \textit{ring} we always mean a commutative ring.


\section{Irreducibility of topological spaces}

Irreducibility and local irreducibility of a scheme are properties of its underlying space, and so we begin our study with topological spaces. First, we consider some properties equivalent to or implied by local irreducibility (\ref{1.20}), and then apply this to show that a space is irreducible if and only if it is nonempty, connected and locally irreducible (\ref{1.35}). We exhibit examples, also spectral ones, showing that none of these conditions can be omitted (\ref{1.36}).

\begin{no}\label{1.10}
A) Recall that a space $X$ is called \textit{irreducible} if the intersection of finitely many nonempty open subsets of $X$ is nonempty, or -- equivalently -- if the union of finitely many proper closed subsets of $X$ is a proper subset of $X$. A subset of $X$ is called \textit{irreducible} if it is so furnished with the topology induced by $X$. We denote by $\irr(X)$ the set of irreducible components of $X$, i.e., of $\subseteq$-maximal irreducible subsets of $X$. For basics on irreducibility and irreducible components we refer the reader to \cite[II.4.1]{ac}.

\smallskip

B) Recall that an irreducible component of a space $X$ is connected and hence contained in a connected component of $X$, and that a connected component of $X$ is nonempty and hence contains an irreducible component of $X$. A space is called \textit{totally disconnected} if its connected components are singletons (hence so are its irreducible components).

\smallskip

C) A space $X$ is called \textit{locally irreducible} if every point of $X$ has an irreducible neighbourhood, or -- equivalently, as nonempty open subsets of irreducible spaces are irreducible -- an irreducible open neighbourhood. Irreducible spaces are locally irreducible, but the converse does not hold as exemplified by the empty space (cf.\,\ref{1.35} and \ref{1.36}).

\smallskip

D) Recall that a space is called \textit{spectral} if it is isomorphic to the underlying space of an affine scheme, and \textit{locally spectral} if every point has a spectral open neighbourhood. It is a result of Hochster that a space is locally spectral if and only if it is isomorphic to the underlying space of a scheme (\cite[Theorem 9]{hochster}).
\end{no}

\begin{prop}\label{1.20}
Let $X$ be a space. We consider the following statements.
\begin{aufz}
\item[(1)] $X$ is locally irreducible;
\item[(2)] The irreducible components of $X$ are open;
\item[(3)] $X$ is the sum of its irreducible components;
\item[(4)] The irreducible components of $X$ and the connected components of $X$ coincide;
\item[(5)] The connected components of $X$ are irreducible;
\item[(6)] The irreducible components of $X$ are pairwise disjoint.
\end{aufz}
We have (1)$\Leftrightarrow$(2)$\Leftrightarrow$(3)$\Rightarrow$(4)$\Leftrightarrow$(5)$\Rightarrow$(6), and if\/ $\irr(X)$ is locally finite then (1)--(6) are equivalent.
\end{prop}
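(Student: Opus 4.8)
The plan is to prove the cycle (1)$\Rightarrow$(2)$\Rightarrow$(3)$\Rightarrow$(1), then (3)$\Rightarrow$(4), then (4)$\Leftrightarrow$(5), then (5)$\Rightarrow$(6), and finally (6)$\Rightarrow$(1) under the additional hypothesis that $\irr(X)$ is locally finite; together these give every asserted implication and equivalence. Three elementary facts will be used repeatedly (all contained in \cite[II.4.1]{ac}): every nonempty open subset of an irreducible space is dense; every irreducible component of $X$ is closed in $X$; and $X$ is covered by its irreducible components (each point $x$ lies in one, since $\overline{\{x\}}$ is irreducible and Zorn's lemma applies). I shall also use freely that irreducible subsets are connected (\ref{1.10}).

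For (1)$\Rightarrow$(2) I would fix an irreducible component $Z$ and a point $x \in Z$, pick an irreducible open neighbourhood $U$ of $x$, and note that $U$ lies in some irreducible component $Z'$; then $Z \cap U$ is a nonempty, hence dense, open subset of $Z$, so $Z \subseteq \overline{Z \cap U} \subseteq Z'$, whence $Z = Z'$ by maximality and $U \subseteq Z$ -- so $Z$ is a neighbourhood of each of its points. For (2)$\Rightarrow$(3) the same density trick shows that distinct irreducible components are disjoint (if $Z_1 \cap Z_2 \ne \emptyset$ it is dense in $Z_1$, forcing $Z_1 \subseteq \overline{Z_1 \cap Z_2} \subseteq Z_2$); being open, pairwise disjoint and covering $X$, the components then realize $X$ as their sum. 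The implication (3)$\Rightarrow$(1) is immediate, each component being an irreducible open neighbourhood of each of its points.

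Next, (3)$\Rightarrow$(4): under (3) each irreducible component $Z$ is clopen, and being connected and nonempty it must equal the connected component containing it; conversely each connected component contains an irreducible component (\ref{1.10}) which, by the foregoing, is that very connected component -- so the two families coincide. The equivalence (4)$\Leftrightarrow$(5) is routine bookkeeping: (4)$\Rightarrow$(5) is trivial, and for (5)$\Rightarrow$(4) one checks that a connected component $C$ is then maximal irreducible (any irreducible $Y \supseteq C$ is connected, hence sits inside a connected component, which must be $C$) and that an irreducible component $Z$ is a connected component ($Z$ is connected, hence lies in some connected component $C$, irreducible by (5), so $Z = C$ by maximality of $Z$). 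Finally (5)$\Rightarrow$(6) holds because connected components are pairwise disjoint.

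The remaining step, (6)$\Rightarrow$(1) assuming $\irr(X)$ locally finite, is the one I expect to carry the real content. Given $x \in X$, let $Z$ be the irreducible component through $x$ and choose an open neighbourhood $V$ of $x$ meeting only finitely many irreducible components, say $Z, Z_1, \dots, Z_n$ with $Z_i \ne Z$; by (6) we have $x \notin Z_1 \cup \cdots \cup Z_n$, which is closed, so $U \dfgl V \setminus (Z_1 \cup \cdots \cup Z_n)$ is an open neighbourhood of $x$ meeting no irreducible component other than $Z$. Since $X$ is covered by its irreducible components, $U \subseteq Z$, hence $U$ is a nonempty open subset of the irreducible space $Z$ and so is itself irreducible; thus $x$ has an irreducible open neighbourhood, giving (1). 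The main obstacle, then, is this last argument -- specifically, seeing that local finiteness lets one shrink a neighbourhood of $x$ down into the single component $Z$ -- while the earlier steps amount to a careful but unexciting manipulation of the three basic facts recalled above.
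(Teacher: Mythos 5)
Your proof is correct and takes essentially the same route as the paper's: the same density/maximality manipulations establish (1)$\Leftrightarrow$(2)$\Leftrightarrow$(3), the same identification of irreducible with connected components gives (4)$\Leftrightarrow$(5), and the key step under local finiteness is the identical shrinking argument (remove the closed union of the finitely many other components meeting a small neighbourhood of $x$). The only differences are cosmetic, e.g.\ you close the loop via (6)$\Rightarrow$(1) rather than (6)$\Rightarrow$(2) and derive (4) from (3) rather than from (2).
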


\begin{proof}
``(1)$\Rightarrow$(2)'': Suppose $X$ is locally irreducible. Let $x\in Z\in\irr(X)$. Then, $x$ has an irreducible open neighbourhood $U$ in $X$. As $U\cap Z$ is nonempty, it is an irreducible component of the irreducible set $U$ (\cite[II.4.1 Proposition 7]{ac}), hence equals $U$, and thus $U\subseteq Z$. This shows that $Z$ is open.

``(2)$\Rightarrow$(1)'': Clear, as open irreducible components are irreducible neighbourhoods of their points.

``(3)$\Rightarrow$(2)'': Clear from the construction of the sum of spaces.

``(2)$\Rightarrow$(4)'': Suppose the irreducible components of $X$ are open; keep in mind that they are closed and nonempty. A connected component $Y$ of $X$ is contained in every open and closed subset of $X$ meeting $Y$ (\cite[I.11.5]{tg}), thus in every irreducible component of $X$ meeting $Y$. The claim follows now together with \ref{1.10}\,B).

``(2)$\Rightarrow$(3)'': Suppose the irreducible components of $X$ are open. We just saw that they then coincide with the connected components of $X$ which thus are open. It is then clear from the construction of the sum of spaces that $X$ is the sum of its connected components, hence of its irreducible components.

``(4)$\Rightarrow$(5)'': Obvious.

``(5)$\Rightarrow$(4)'': Clear from \ref{1.10}\,B).

``(4)$\Rightarrow$(6)'': Clear, as connected components are pairwise disjoint.

Finally, suppose $\irr(X)$ is locally finite. ``(6)$\Rightarrow$(2)'': Let $x\in Z\in\irr(X)$. Then, $x$ has an open neighbourhood $U$ in $X$ that meets only finitely many elements of $\irr(X)$. The union $Y$ of these finitely many elements of $\irr(X)$ is closed, hence $U\setminus Y$ is an open neighbourhood of $x$ contained in $Z$, and thus $Z$ is open as desired.
\end{proof}

\begin{cor}\label{1.30}
If a space $X$ is locally irreducible then $\irr(X)$ is locally finite.
\end{cor}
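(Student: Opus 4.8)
The plan is to deduce this directly from Proposition \ref{1.20}. Since $X$ is locally irreducible, condition (1) holds, hence also (2), (4) and (6): the irreducible components of $X$ are open, pairwise disjoint, and coincide with the connected components of $X$. In particular $\irr(X)$ is a partition of $X$ into open subsets, so it remains to check that each point of $X$ has a neighbourhood meeting only finitely many of them.

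Next I would fix $x\in X$. By local irreducibility, $x$ has an irreducible --- in particular connected --- open neighbourhood $U$. Being connected, $U$ is contained in a single connected component of $X$, that is, in a single $Z\in\irr(X)$; since distinct irreducible components are disjoint, $U$ then meets no other element of $\irr(X)$. (If one prefers to avoid (4): $U$ is nonempty, so it meets some $Z\in\irr(X)$, whence $U\cap Z$ is a nonempty open, hence irreducible, subset of $U$ and therefore an irreducible component of $U$ by \cite[II.4.1 Proposition 7]{ac}; thus $U\cap Z=U$ and $U\subseteq Z$, and disjointness of the components rules out any further $Z'$.) Either way, $U$ is an open neighbourhood of $x$ meeting exactly one element of $\irr(X)$.

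Since $x$ was arbitrary, every point of $X$ has an open neighbourhood meeting only finitely many (in fact exactly one) element of $\irr(X)$, which is precisely the assertion that $\irr(X)$ is locally finite. I do not expect a serious obstacle here; the only things requiring a little care are invoking the correct implications of Proposition \ref{1.20} and the elementary facts that a nonempty open subset of an irreducible space is again irreducible and that an irreducible space is its own unique irreducible component.
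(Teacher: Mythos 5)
Your proof is correct and follows essentially the same route as the paper: take an irreducible open neighbourhood $U$ of a point, use Proposition \ref{1.20} to see that the irreducible components are pairwise disjoint, and conclude that $U$ lies in a single component and hence meets only finitely many. The extra detour through connected components is harmless but unnecessary, since an irreducible subset is automatically contained in some irreducible component.
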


\begin{proof}
A point of a locally irreducible space $X$ has an irreducible open neighbourhood $U$. As the irreducible components of $X$ are pairwise disjoint (\ref{1.20}), $U$ is contained in a unique irreducible component of $X$ and in particular meets only finitely many irreducible components. Thus, $\irr(X)$ is locally finite.
\end{proof}

\begin{no}\label{1.40}
A) The implication ``(4)$\Rightarrow$(3)'' in \ref{1.20} does not necessarily hold. Indeed, a totally disconnected space fulfils (4)--(6), and it is locally irreducible if and only if it is discrete. Therefore, every nondiscrete, totally disconnected space (e.g.\,$\mathbbm{Q}$) fulfils (4), but not (3). For a spectral example, see \ref{1.43}.

\smallskip

B) The implication ``(6)$\Rightarrow$(5)'' in \ref{1.20} does not necessarily hold, not even if $X$ is connected. Indeed, a separated space fulfils (6), and it fulfils (5) if and only if it is totally disconnected. Therefore, every separated space that is not totally disconnected fulfils (6), but not (5). In particular, every connected, separated space with at least two points (e.g.\,$\mathbbm{R}$) fulfils (6), but not (5). For a spectral example, see \ref{1.45}.

\smallskip

C) The converse of \ref{1.30} is not true, not even if $X$ is finite: A space with three points, one of them closed and the others open (but not closed), is not locally irreducible, since the closed point has no irreducible neighbourhood. For a spectral example, see \ref{1.36}\,C).
\end{no}

\begin{no}\label{1.37}
In \cite[0.2.1.6]{ega}\footnote{cf.\,\cite[II.4 Exercice 6\,b)]{ac}}, local irreducibility is defined only if $\irr(X)$ is locally finite, and then it is done by means of the (then) equivalent conditions (2), (4), (5) and (6) from \ref{1.20}. In particular, if $\irr(X)$ is locally finite this coincides with our definition by \ref{1.20}. Moreover, our definition does not conflict with the one in \cite{ega} by \ref{1.30}.
\end{no}

\begin{cor}\label{1.35}
A space is irreducible if and only if it is nonempty, connected, and locally irreducible.
\end{cor}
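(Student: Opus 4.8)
The statement will follow immediately by combining Proposition \ref{1.20} with the basic facts recalled in \ref{1.10}, so the plan is simply to assemble the two implications.

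For the forward direction, suppose $X$ is irreducible. Then $X$ is nonempty by the convention of \ref{1.10}\,A) (the empty space is precisely the obstruction noted in \ref{1.10}\,C)). Being a $\subseteq$-maximal irreducible subset of itself, $X$ is its unique irreducible component, so $X$ is connected by \ref{1.10}\,B); and $X$ is locally irreducible by \ref{1.10}\,C). Hence $X$ is nonempty, connected, and locally irreducible.

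For the reverse direction, suppose $X$ is nonempty, connected, and locally irreducible. Local irreducibility is statement (1) of Proposition \ref{1.20}, and the chain of implications established there gives (1)$\Rightarrow$(5), i.e.\ the connected components of $X$ are irreducible. Since $X$ is nonempty and connected, $X$ is its own (unique) connected component, and is therefore irreducible.

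There is essentially no obstacle here: every step is a direct appeal to \ref{1.10} or to the already-proved chain of implications in \ref{1.20}. The only points deserving a word of care are to \emph{use} rather than reprove the conventions of the set-up — in particular that ``irreducible'' includes ``nonempty'' — and to note that the implication (1)$\Rightarrow$(5) of \ref{1.20} is available \emph{without} any local finiteness hypothesis on $\irr(X)$, which is exactly what makes the corollary hold in full generality.
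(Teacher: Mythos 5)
Your proof is correct and follows essentially the same route as the paper: the reverse direction applies Proposition \ref{1.20} (you use (1)$\Rightarrow$(5) where the paper uses (1)$\Rightarrow$(4), an immaterial difference) to identify the unique connected component with an irreducible one, and the forward direction is the routine verification the paper dismisses as ``clear''.
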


\begin{proof}
A nonempty, connected, and locally irreducible space is its only connected component, and its connected components coinciding with its irreducible components (\ref{1.20}) it follows that it is irreducible. The converse is clear. 
\end{proof}

\begin{no}\label{1.36}
A) None of the conditions in \ref{1.35} can be omitted. Indeed, the empty space is connected and locally irreducible; a discrete space with two points is nonempty, locally irreducible, but not connected; and the space $\mathbbm{R}$ is nonempty, connected, but not irreducible.

\smallskip

B) There are spectral examples as in A). In the first case we can also consider the empty space, and in the second one the spectrum of the product of two fields. In the third case, for a field $K$, the spectrum of $K[X,Y]/\langle XY\rangle$ is nonempty, connected, but not irreducible (as this ring has the two different minimal primes $\langle X\rangle$ and $\langle Y\rangle$).

\smallskip

C) The third example in B) also provides a spectral counterexample to the converse of \ref{1.30} that is connected and has only finitely many irreducible components.
\end{no}

\begin{prop}\label{1.41}
A locally spectral space is totally disconnected if and only if its dimension\footnote{By the dimension of a topological space we always mean its Krull dimension as defined in \cite[VIII.1.1]{ac}.} is at most $0$.
\end{prop}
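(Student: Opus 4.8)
The plan is to first re-express the hypothesis $\dim X\le 0$ as a separation axiom. Since $X$ is locally spectral it has an open cover by spectral — hence sober — subspaces, and from this one deduces that $X$ is itself sober and $T_0$: an irreducible closed $Z\subseteq X$ meets some spectral open $U$ in a subset that is closed in $U$, dense in $Z$ and irreducible, hence has a generic point, which is then generic for $Z$; the $T_0$ property follows similarly. For a sober $T_0$ space, $\dim X\le 0$ is equivalent to $X$ being $T_1$: if some $y\in\overline{\{x\}}$ is different from $x$, then $\overline{\{y\}}$ is a proper (by $T_0$) irreducible closed subset of $\overline{\{x\}}$, giving a chain of length $1$; conversely, in a sober $T_1$ space every irreducible closed subset is a singleton, so no chain of length $\ge 1$ exists. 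The implication ``totally disconnected $\Rightarrow\dim X\le 0$'' is now immediate and uses nothing about $X$: every irreducible subset is connected, hence a singleton, so once again there is no chain of length $\ge 1$ of irreducible closed subsets.

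For the converse, assume $\dim X\le 0$, so that $X$ is $T_1$, and let $C$ be a connected component of $X$; we must show $|C|\le 1$. Since $C$ is closed in $X$ it is again locally spectral (closed subsets of $\spec A$ are spectra of quotients, so closed subspaces of locally spectral spaces are locally spectral), connected and of dimension $\le 0$; hence we may assume $X=C$ is connected. Cover $X$ by spectral open subspaces $U_i\cong\spec A_i$. Each $U_i$ is $T_1$, so every prime of $A_i$ is maximal, i.e.\ $\dim A_i\le 0$; therefore $(A_i)_{\mathrm{red}}$ is von Neumann regular and $U_i=\spec(A_i)_{\mathrm{red}}$ is a \emph{Boolean} space, i.e.\ compact, Hausdorff and totally disconnected. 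It thus suffices to prove the purely topological assertion: \emph{a connected space admitting an open cover by Boolean subspaces has at most one point.}

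So let $X$ be such a space and suppose $|X|\ge 2$. If some covering piece $U$ equals $X$ then $X$ is Boolean and connected, hence a single point — a contradiction; so every covering piece is a proper nonempty open subset, and (by connectedness) is not closed. Fix such a $U$ and a point $z\in\overline{U}\setminus U$ lying in another covering piece $U'$; then $U\cap U'\neq\emptyset$, and since $U$ has a basis of clopen sets we may choose a clopen subset $L$ of $U$ with $\emptyset\neq L\subseteq U\cap U'$. Then $L$ is compact (closed in the compact space $U$); being a compact subset of the Hausdorff space $U'$, it is also closed in $U'$; so $L$ is clopen in $U$ and in $U'$, hence clopen in $U\cup U'$, while $L\neq\emptyset$ and $L\neq U\cup U'$ (since $z\notin U\supseteq L$). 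If the cover consists of just $U$ and $U'$ this already contradicts connectedness of $X=U\cup U'$.

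For an arbitrary cover one must propagate this construction. If $L$ is not closed in $X$, pick $z_1\in\overline{L}\setminus L$ — necessarily outside $U\cup U'$ — in a further covering piece $U_1$, and choose a clopen subset $L_1$ of $U_1$ with $\emptyset\neq L_1\subseteq L\cap U_1$; being compact, open in $X$ and contained in each of the Hausdorff spaces $U$, $U'$, $U_1$, it is clopen in $U\cup U'\cup U_1$, nonempty and proper; iterating yields nonempty compact sets $L\supseteq L_1\supseteq L_2\supseteq\cdots$. Controlling this process by the quasi-compactness of the Boolean pieces — for instance by covering the compact set $L$ by finitely many $U_i$ and working along a finite chain of overlapping pieces joining two prescribed points — one ultimately produces a nontrivial clopen subset of $X$, contradicting connectedness. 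Granting this last (and only genuinely delicate) step, the topological assertion holds; applied to $X=C$ it gives $|C|\le 1$, whence $X$ is totally disconnected, and the proof is complete.
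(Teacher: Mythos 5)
Your first three paragraphs are sound: the identification of $\dim X\le 0$ with the $T_1$ property for locally spectral (hence sober) spaces, the easy implication, the reduction to a connected closed component, and the observation that each spectral open piece is a Boolean space are all correct. But the proof then rests entirely on the assertion that \emph{a connected space admitting an open cover by Boolean subspaces has at most one point}, and this assertion is exactly where all the content of the proposition lies --- and you do not prove it. The iteration you sketch produces sets $L_n$ that are clopen only in the finite union $U\cup U'\cup U_1\cup\dots\cup U_n$, and nothing forces the process to terminate: at each stage $\overline{L_n}$ may escape into a new covering piece, the cover need not be finite ($X$ need not be quasicompact), and the intersection $\bigcap_n L_n$ of the resulting decreasing chain of compacta, while nonempty, has no reason to be open. ``Granting this last step'' is granting the theorem.

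Moreover, the specific repair you gesture at --- ``working along a finite chain of overlapping pieces joining two prescribed points'' so as to separate them by a clopen set --- provably cannot work. Glue two copies $B_1,B_2$ of the Cantor set $B$ along the open set $W=B\setminus\{p\}$, where $p$ is a non-isolated point. The resulting space $X$ is locally spectral of dimension $0$ and is covered by the two overlapping Boolean pieces $B_1,B_2$; yet every clopen subset $C$ of $X$ containing the doubled point $p_1$ also contains $p_2$, because $C\cap B_1$ is a clopen neighbourhood of $p$ and hence meets $W$ arbitrarily close to $p$, so the closed set $C\cap B_2$ must contain $p_2$. Thus the two prescribed points $p_1,p_2$ cannot be separated by a clopen subset of $B_1\cup B_2$ even though (as one checks) they lie in distinct connected components. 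Any correct completion must therefore exploit the connectedness of all of $X$ globally rather than through a chain of local separations. For comparison, the paper proceeds differently: it shows via idempotents that in a reduced $0$-dimensional ring every basic open $D(x)$ equals a clopen $D(e)$, deduces that the topology has a basis of clopen sets, and then traps the connected component of $x$ inside the quasi-component $\bigcap\{U\text{ clopen}\mid x\in U\}=\bigcap\{U\text{ open}\mid x\in U\}=\{x\}$. Note that the passage from ``clopen in a spectral piece'' to ``clopen in $X$'' is the same delicate point in both arguments --- the example above shows it is not automatic --- so that is the step on which any rescue of your approach (or a careful reading of the paper's reduction) must concentrate.
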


\begin{proof}
A space has dimension at most $0$ if and only if its points are closed. A totally disconnected space clearly fulfils this. For the converse, keeping in mind that the empty space is totally disconnected, we consider a locally spectral space $X$ with $\dim(X)=0$.

First, we prove that the topology of $X$ has a basis consisting of sets that are open and closed. For this, since $X$ has a spectral open covering, we can suppose $X=\spec(R)$ with a reduced, $0$-dimensional ring $R$. Let $x\in R$. For every $\mathfrak{m}\in\spec(R)$, the local ring $R_{\mathfrak{m}}$ is reduced and $0$-dimensional, thus a field, and therefore $\langle\frac{x}{1}\rangle_{R_{\mathfrak{m}}}=\langle\frac{x^2}{1}\rangle_{R_{\mathfrak{m}}}$. This implies $\langle x\rangle_R=\langle x^2\rangle_R$, hence there exists $y\in R$ with $x=x^2y$. Setting $e\dfgl xy\in R$ it is readily checked that $e$ is idempotent and $\langle x\rangle_R=\langle e\rangle_R$, hence $D(x)=D(e)$. By idempotency, $e(1-e)=0$, implying $V(e)=D(1-e)$, and thus $D(e)$ is open and closed. Therefore, $(D(e))_{e\in\Idem(R)}$ is a basis of the topology of $\spec(R)$ consisting of sets that are open and closed as desired.

Now, back in the general situation, let $x\in X$ and let $Z$ be the connected component of $X$ containing $x$. Then, $$\{x\}\subseteq Z\subseteq\bigcap\{U\subseteq X\mid U\text{ is open and closed }\wedge x\in U\}=$$$$\bigcap\{U\subseteq X\mid U\text{ is open }\wedge x\in U\}=\{x\}.$$ Indeed, the second inclusion holds by \cite[I.11.5]{tg}\footnote{and is in fact, by \cite[8.5]{lazard}, an equality if $X$ is quasicompact and quasiseparated}, the first equality follows from the preceding paragraph, and the second equality holds because every point of $X$ is closed. So, $Z=\{x\}$, and thus $X$ is totally disconnected.
\end{proof}

\begin{cor}\label{1.42}
An infinite spectral space of dimension $0$ is nondiscrete and totally disconnected.
\end{cor}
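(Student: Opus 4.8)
The plan is to split the claim into its two parts, each of which follows almost immediately from material already available. Total disconnectedness is a direct instance of Proposition~\ref{1.41}: a spectral space is in particular locally spectral, and by hypothesis it has dimension $0$, hence dimension at most $0$, so \ref{1.41} yields at once that it is totally disconnected. No further work is needed for this half.

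For nondiscreteness I would invoke the one extra fact that a spectral space is quasicompact: by definition it is isomorphic to the underlying space of an affine scheme $\spec(R)$, and $\spec(R)$ is quasicompact for every ring $R$ (any covering by basic open sets $D(f)$ corresponds to a family of elements $f$ generating the unit ideal, finitely many of which already generate it). Thus, assuming for contradiction that the infinite spectral space $X$ in question were discrete, the family of singletons $(\{x\})_{x\in X}$ would form an open covering of $X$ admitting no finite subcovering, contradicting quasicompactness. Hence $X$ is nondiscrete.

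I do not expect any genuine obstacle here, since the statement is merely the conjunction of Proposition~\ref{1.41} with the quasicompactness of affine spectra; the only mild points of care are to read the hypothesis ``dimension $0$'' as ``dimension $\le 0$'' when applying \ref{1.41}, and to recall (rather than reprove) that $\spec(R)$ is always quasicompact, together with the elementary fact that an infinite discrete space fails to be quasicompact.
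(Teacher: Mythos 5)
Your proof is correct and matches the paper's own argument exactly: total disconnectedness comes from Proposition~\ref{1.41}, and nondiscreteness comes from quasicompactness of spectral spaces together with the fact that a quasicompact discrete space is finite. No differences worth noting.
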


\begin{proof}
Immediately from \ref{1.41}, for spectral spaces are quasicompact, and quasicompact discrete spaces are finite.
\end{proof}

\begin{no}\label{1.43}
A) There are spectral examples as in \ref{1.40}\,A), i.e., nondiscrete, totally disconnected spectral spaces. Indeed, by \ref{1.42} every infinite affine scheme of dimension $0$ qualifies, and a class of such schemes are spectra of products of infinitely many fields. We provide a direct proof that they fulfil our wishes. 

Let $(K_i)_{i\in I}$ be an infinite family of fields and let $R\dfgl\prod_{i\in I}K_i$. First, if $j\in I$ then $R/\prod_{i\in I\setminus\{j\}}K_i\cong K_j$ is a field, implying $\{\prod_{i\in I\setminus\{j\}}K_i\mid j\in I\}\subseteq\spec(R)$, and therefore $\spec(R)$ is infinite. Furthermore, for $x=(x_i)_{i\in I}\in R$ we define an element $\overline{x}=(\overline{x}_i)_{i\in I}\in R$ with $\overline{x}_i=x_i^{-1}$ if $x_i\neq 0$ and $\overline{x}_i=0$ if $x_i=0$ for $i\in I$. Clearly, if $x\in R$ then $x=x^2\overline{x}$. Let $\mathfrak{p}\in\spec(R)$ and let $x\in R\setminus\mathfrak{p}$. Then, $x=x^2\overline{x}$, implying $x+\mathfrak{p}=(x+\mathfrak{p})(x\overline{x}+\mathfrak{p})$, hence $1+\mathfrak{p}=x\overline{x}+\mathfrak{p}=(x+\mathfrak{p})(\overline{x}+\mathfrak{p})$, so that $x+\mathfrak{p}$ is a unit in $R/\mathfrak{p}$. This shows that $R/\mathfrak{p}$ is a field, thus $\dim(R)=0$. 

\smallskip

B) A further class of spectral examples as in \ref{1.40}\,A) are infinite, compact, totally disconnected spaces; they are spectral by \cite[I.1.3.16]{ega}. An example of such a space is Cantor's ternary set (\cite[IV.2.5 Exemple]{tg}).
\end{no}

\begin{no}\label{1.45}
A) There are spectral examples as in \ref{1.40}\,B). The following (seemingly unpublished) construction by Hochster produces such spaces. More precisely, it yields connected affine schemes that are pointwise integral (hence pointwise irreducible and thus by \ref{2.45}\,a) fulfil property (6) in \ref{1.20}), but not integral (hence not irreducible and thus do not fulfil property (5) in \ref{1.20}) -- cf.\,\ref{3.10}. (A geometric description of a special case of this construction can be found \cite[0568]{stacks}.)

\smallskip

B) The ingredients of the construction are a field $K$ and a gapfree\footnote{An ordered set $(E,\leq)$ is called \textit{gapfree} if it contains two comparable elements and if for $x,y\in E$ with $x<y$ there exists $z\in E$ with $x<z<y$.} totally ordered set $(E,\leq)$, e.g. $\mathbbm{Q}$ with its usual ordering. Let $L\dfgl E\times\mathbbm{N}^*$ as a set. We define a multiplication on $L$ by setting \[(x,m)\cdot(y,n)=\begin{cases}(x,m),&\text{if }x<y;\\(y,n),&\text{if }x>y;\\(x,m+n),&\text{if }x=y.\\\end{cases}\] It is readily checked that this yields a structure of commutative and associative magma on $L$. We denote by $M$ the multiplicatively written commutative monoid obtained from $L$ by adjoining a neutral element (denoted by $1$). Next, we consider the algebra $R=K[M]$ of the monoid $M$ over $K$ and denote by $(e_m)_{m\in M}$ its canonical basis. We will show now that $X=\spec(R)$ has the desired properties.

\smallskip

C) The ring $R$ is not integral, for if $x,y\in E$ with $x<y$ then $(x,1)(1-(y,1))=0$. Let $r\in R\setminus K$, so that $r=r_1e_1+\sum_{\alpha\in L'}r_{\alpha}e_{\alpha}$ with $r_1\in K$, a nonempty finite subset $L'\subseteq L$ and a family $(r_{\alpha})_{\alpha\in L'}$ in $K\setminus 0$. Then, \[l\dfgl\max\{m\in\mathbbm{N}\mid\exists x\in E:(x,m)\in L'\}\text{ and }z\dfgl\max\{x\in E\mid(x,l)\in L'\}\] exist. So, $r$ has a summand of the form $r_{(z,l)}e_{(z,l)}$ with $r_{(z,l)}\in K\setminus 0$, and thus $r^2$ has a summand of the form $r_{(z,l)}^2e_{(z,2l)}$. It follows $r^2\neq 0$, and the choice of $l$ and $z$ implies moreover $r^2\neq r$. Therefore, $R$ is reduced, and $\Idem(R)=\{0,1\}$; thus, $X$ is connected (\cite[II.4.3 Proposition 15 Corollaire 2]{ac}). As it is not integral we see now that $X$ is not locally irreducible. (The fact that $X$ is connected and reduced but not integral could also be deduced from \cite[8.1; 9.9; 10.8]{gilmer}, since the monoid $M$ is readily checked to be torsionfree and aperiodic, but not cancellable\footnote{A multiplicatively written commutative monoid $M$ is called \textit{torsionfree} if for $x,y\in M$ and $n\in\mathbbm{N}$ with $x^n=y^n$ it follows $x=y$, \textit{aperiodic} if for $x\in M$ and $m,n\in\mathbbm{N}$ with $x^m=x^n$ it follows $m=n$, and \textit{cancellable} if for $x,y,z\in M$ with $xz=yz$ it follows $x=y$.}.)

\smallskip

D) Let $\mathfrak{p}\in X$. The sets $I\dfgl\{x\in E\mid e_{(x,1)}\in\mathfrak{p}\}$ and $J\dfgl\{x\in E\mid e_{(x,1)}\notin\mathfrak{p}\}$ define a partition of $E$ such that if $x\in I$ and $y\in J$ then $x<y$. Thus, since $E$ is gapfree, $I$ has no greatest element or $J$ has no smallest element. We consider now the canonical morphism of rings $\eta\colon R\rightarrow R_{\mathfrak{p}}$. If $x\in J$ is not the smallest element of $J$ then there exists $y\in J$ with $y<x$, implying $e_{(y,1)}(1-e_{(x,1)})=0$, and as $e_{(y,1)}\notin\mathfrak{p}$ we get $\eta(e_{(x,1)})=\eta(1)=1_{R_{\mathfrak{p}}}$. If $x\in I$ is not the greatest element of $I$ then there exists $y\in I$ with $x<y$, implying $e_{(x,1)}(1-e_{(y,1)})=0$, and as $e_{(x,1)}\in\mathfrak{p}$ we get $1-e_{(y,1)}\notin\mathfrak{p}$, hence $\eta(e_{(x,1)})=0_{R_{\mathfrak{p}}}$. So, if $I$ has no greatest element and $J$ has no smallest element the above implies that $R_{\mathfrak{p}}=K$, and thus $R_{\mathfrak{p}}$ is integral. Otherwise, if $I$ has a greatest element or $J$ has a smallest element $z$, the above also implies that $R_{\mathfrak{p}}$ is a local ring of fractions of the polynomial algebra $K[e_{(z,1)}]$, and thus in particular integral. Herewith it is proven that $X$ is pointwise integral.

\smallskip

E) The affine scheme $X$ constructed in B) has the property that the stalks of its structure sheaf are discrete valuation rings, and thus it is one-dimensional and pointwise noetherian (but of course nonnoetherian by \ref{1.20}). This follows from the description of $R_{\mathfrak{p}}$ in C) together with \cite[VII.1.6 Th\'eor\`eme 4]{ac}.
\end{no}


\section{Irreducibility of schemes}

Besides the purely topological properties of being irreducible or locally irreducible, the structure of ringed space of a scheme allows a further related notion, namely pointwise irreducibility. We show that a scheme is locally irreducible if and only if it is pointwise irreducible and the set of its irreducible components is locally finite (\ref{2.70}\,a)), and we explain how the spectral counterexamples in the first section show that none of these conditions can be omitted.

\begin{no}
A) A scheme is called \textit{irreducible} or \textit{locally irreducible} if its underlying topological space is so. Irreducible schemes are locally irreducible, but the converse does not hold, not even for affine schemes -- the spectrum of the product of two fields is a counterexample (\ref{1.36}\,B)).

\smallskip

B) A ring is called \textit{irreducible} or \textit{locally irreducible} if its spectrum is so. A ring is irreducible if and only if it has precisely one minimal prime ideal. A scheme $X$ is called \textit{pointwise irreducible} if the ring $\mathscr{O}_{X,x}$ is irreducible for every $x\in X$. A ring is called \textit{pointwise irreducible} if its spectrum is so.
\end{no}

\begin{no}
A) In \cite{ega}, local irreducibility for a scheme $X$ is not defined explicitly, and implicitly only in case $\irr(X)$ is locally finite (cf.~\ref{1.37}). Pointwise irreducibility of a scheme is not defined at all.\smallskip

B) Our definition of irreducibility of a ring conflicts with Bourbaki's definition of irreducibility of ideals in \cite[I.8 Exercise 11]{a}. There, an ideal $\mathfrak{a}$ of a ring $R$ is defined to be irreducible if it is not the intersection of two distinct ideals of $R$ different from $\mathfrak{a}$. As every ring is irreducible in this sense our definition will not cause any confusion.

The relation between these two notions of irreducibility is as follows: An ideal $\mathfrak{a}$ is prime if and only if it is different from $R$, irreducible and equals its own radical. Thus, a ring $R$ is irreducible in our sense if and only if it is nonzero and its nilradical is irreducible in Bourbaki's sense.
\end{no}

\begin{prop}\label{2.40}
A point $x$ of a scheme $X$ is contained in a unique irreducible component of $X$ if and only if $\mathscr{O}_{X,x}$ is irreducible.
\end{prop}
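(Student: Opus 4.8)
The plan is to reduce, via an affine open neighbourhood of $x$, to a statement about prime ideals, and then to invoke the standard bijection between the minimal primes of a localisation $R_{\mathfrak{q}}$ and the minimal primes of $R$ contained in $\mathfrak{q}$. To begin, I would note that every point of $X$ lies in at least one irreducible component: $\overline{\{x\}}$ is irreducible, and by Zorn's lemma every irreducible subset of a space is contained in a $\subseteq$-maximal one. Hence ``$x$ is contained in a unique irreducible component'' means ``$x$ is contained in exactly one'', and it suffices to compare the cardinality of $\{Z\in\irr(X)\mid x\in Z\}$ with the number of minimal primes of $\mathscr{O}_{X,x}$ --- recalling that a ring is irreducible precisely when it has exactly one minimal prime ideal.

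Next I would choose an affine open neighbourhood $U=\spec(R)$ of $x$ and let $\mathfrak{q}\in\spec(R)$ correspond to $x$, so that $\mathscr{O}_{X,x}\cong R_{\mathfrak{q}}$. The key point is that the assignments $Z\mapsto Z\cap U$ and $W\mapsto\overline{W}$ (closure in $X$) are mutually inverse bijections between $\{Z\in\irr(X)\mid Z\cap U\neq\emptyset\}$ and $\irr(U)$: the first is \cite[II.4.1 Proposition 7]{ac}, and for the second one uses that $W$ is a dense subset of the irreducible space $\overline{W}$, so that any irreducible subset of $X$ containing $\overline{W}$ meets $U$ in an irreducible subset of $U$ containing $W$, hence equal to $W$ by maximality; that the two assignments are inverse to one another then follows by a similar density argument. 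Since $x\in U$, this bijection restricts to one between $\{Z\in\irr(X)\mid x\in Z\}$ and $\{W\in\irr(U)\mid x\in W\}$, so the claim reduces to the assertion that $\mathfrak{q}$ lies in a unique irreducible component of $\spec(R)$ if and only if $R_{\mathfrak{q}}$ is irreducible.

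Finally I would treat the affine case directly. The irreducible components of $\spec(R)$ are exactly the sets $V(\mathfrak{p})$ with $\mathfrak{p}$ a minimal prime of $R$, and $\mathfrak{q}\in V(\mathfrak{p})$ if and only if $\mathfrak{p}\subseteq\mathfrak{q}$. The canonical morphism $\spec(R_{\mathfrak{q}})\to\spec(R)$ is an isomorphism of ordered sets onto $\{\mathfrak{p}\in\spec(R)\mid\mathfrak{p}\subseteq\mathfrak{q}\}$, and therefore carries the minimal primes of $R_{\mathfrak{q}}$ bijectively onto the minimal primes of $R$ contained in $\mathfrak{q}$, i.e.\ onto the set of irreducible components of $\spec(R)$ passing through $\mathfrak{q}$. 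Hence $\mathfrak{q}$ lies in a unique irreducible component of $\spec(R)$ exactly when $R_{\mathfrak{q}}$ has exactly one minimal prime, i.e.\ is irreducible; combined with the previous paragraph, this proves the proposition.

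The only step that is more than routine bookkeeping is the invariance of ``irreducible components through $x$'' under passage to an open neighbourhood, and within it the claim that $\overline{W}$ is genuinely an irreducible component of $X$ --- not merely an irreducible subset --- when $W\in\irr(U)$; this is where I would take the most care, the relevant input being that an irreducible space is the closure of each of its nonempty open subsets.
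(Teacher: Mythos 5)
Your proof is correct and follows essentially the same route as the paper's: reduce to an affine open neighbourhood $\spec(R)$ of $x$ and identify the irreducible components of $X$ through $x$ with the minimal primes of the localisation $R_{\mathfrak{q}}$. The only difference is one of detail --- you spell out the bijection $Z\mapsto Z\cap U$, $W\mapsto\overline{W}$ justifying the affine reduction, which the paper passes over with ``we can suppose $X$ is affine''.
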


\begin{proof}
We can suppose $X$ is affine with ring $R$. Then, $x\in X$ is contained in two distinct irreducible components of $X$ if and only if it contains two distinct minimal prime ideals of $R$. This is equivalent to $R_x$ having two distinct minimal prime ideals, hence to $\mathscr{O}_{X,x}=R_x$ not being irreducible.
\end{proof}

\begin{cor}\label{2.45}
a) A scheme is pointwise irreducible if and only if its irreducible components are pairwise disjoint.\footnote{This shows that -- for schemes -- pointwise irreducibility is equivalent to property (6) in \ref{1.20}.}

b) Locally irreducible schemes are pointwise irreducible.
\end{cor}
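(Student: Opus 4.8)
For part a), the plan is to read off the equivalence from Proposition \ref{2.40} after one elementary observation: every point $x$ of a topological space lies in \emph{at least} one irreducible component, since $\overline{\{x\}}$ is irreducible and hence contained in some $\subseteq$-maximal irreducible subset (see \cite[II.4.1]{ac}). Consequently, ``$x$ is contained in a unique irreducible component of $X$'' is logically the same as ``no two distinct irreducible components of $X$ contain $x$''. Quantifying over all $x\in X$: by definition $X$ is pointwise irreducible iff $\mathscr{O}_{X,x}$ is irreducible for every $x\in X$; by Proposition \ref{2.40} this holds iff every $x$ lies in a unique irreducible component; and by the observation just made this holds iff the irreducible components of $X$ are pairwise disjoint. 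This chain of equivalences is the whole proof of a).

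For part b), I would simply combine a) with Proposition \ref{1.20}. A locally irreducible scheme has, by definition, a locally irreducible underlying space, i.e.\ one satisfying condition (1) of Proposition \ref{1.20}; by the implications established there, (1) implies (6), so the irreducible components are pairwise disjoint. Part a) then yields that the scheme is pointwise irreducible.

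I do not expect any genuine obstacle. The only point requiring a moment's care is the forward direction in a), namely deducing pairwise disjointness of the components from the fact that each point lies in exactly one: this needs precisely that each point lies in at least one component, which is the elementary fact recalled above. The reverse direction and part b) are immediate from the cited results.
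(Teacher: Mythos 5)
Your proposal is correct and follows exactly the route of the paper, whose proof reads simply ``Immediately from \ref{2.40} and \ref{1.20}'': part a) from Proposition \ref{2.40} together with the elementary fact that every point lies in at least one irreducible component, and part b) from the implication (1)$\Rightarrow$(6) of Proposition \ref{1.20} combined with a). You have merely spelled out the details the paper leaves implicit.
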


\begin{proof}
Immediately from \ref{2.40} and \ref{1.20}.
\end{proof}

\begin{cor}\label{2.70}
a) A scheme $X$ is locally irreducible if and only if it is pointwise irreducible and $\irr(X)$ is locally finite.

b) A scheme $X$ is irreducible if and only if it is non\-empty, connected and pointwise irreducible and $\irr(X)$ is locally finite.
\end{cor}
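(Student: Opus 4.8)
The plan is to deduce both parts directly by assembling the results already in hand, so the "proof" is really a matter of bookkeeping rather than new ideas. For part a), I would argue the two implications separately. For the forward direction, suppose $X$ is locally irreducible: then \ref{2.45}\,b) gives that $X$ is pointwise irreducible, and \ref{1.30} gives that $\irr(X)$ is locally finite, so both conditions on the right hold. For the reverse direction, suppose $X$ is pointwise irreducible and $\irr(X)$ is locally finite: by \ref{2.45}\,a), pointwise irreducibility means exactly that the irreducible components of $X$ are pairwise disjoint, i.e.\ condition (6) of \ref{1.20} holds; since $\irr(X)$ is locally finite, the last clause of \ref{1.20} tells us (6) implies (1), so $X$ is locally irreducible.

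For part b), I would simply combine part a) with the topological characterisation of irreducibility in \ref{1.35}: a scheme is irreducible if and only if its underlying space is nonempty, connected, and locally irreducible, and by a) the last condition may be replaced by "pointwise irreducible and $\irr(X)$ locally finite". One only needs to note that "nonempty" and "connected" are unaffected, so the substitution goes through verbatim.

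There is essentially no obstacle here; the only point requiring a little care is to make sure the chain of equivalences in \ref{1.20} is invoked with the hypothesis that $\irr(X)$ is locally finite precisely where it is needed (namely for the implication (6)$\Rightarrow$(1)), and to cite \ref{2.45} correctly for the translation between pointwise irreducibility and property (6). I would keep the write-up to a few lines, since everything follows formally from \ref{1.20}, \ref{1.30}, \ref{1.35}, and \ref{2.45}.
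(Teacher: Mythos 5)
Your proposal is correct and follows exactly the paper's own route: part a) from \ref{2.45}, \ref{1.30} and the implication (6)$\Rightarrow$(1) of \ref{1.20} under local finiteness of $\irr(X)$, and part b) from \ref{1.35} combined with a). You have merely spelled out the details that the paper's one-line proof leaves implicit.
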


\begin{proof}
a) follows from \ref{2.45}, \ref{1.30} and \ref{1.20}; b) follows from \ref{1.35} and a).
\end{proof}

\begin{no}\label{2.75}
A) None of the conditions in \ref{2.70}\,a) can be omitted, not even if $X$ is affine and connected, hence the converse of \ref{2.45}\,b) does not hold. Indeed, in \ref{1.45} we saw a connected affine scheme that is pointwise irreducible but not locally irreducible. Furthermore, for a field $K$, the spectrum of $K[X,Y]/\langle XY\rangle$ is connected but not pointwise irreducible and has only finitely many irreducible components (\ref{1.36}\,B)).

\smallskip

B) The examples in A) and in \ref{1.36}\,B) also show that none of the conditions in \ref{2.70}\,b) can be omitted, not even if $X$ is affine.
\end{no}

\begin{prop}\label{2.60}
If a point $x$ of a scheme $X$ has an irreducible neighbourhood in $X$ then $\mathscr{O}_{X,x}$ is irreducible; the converse holds if $\irr(X)$ is locally finite.
\end{prop}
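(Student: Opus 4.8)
The plan is to route both directions through Proposition~\ref{2.40}, which converts irreducibility of $\mathscr{O}_{X,x}$ into the purely topological assertion that $x$ lies in a unique irreducible component of $X$.

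For the forward implication I would first shrink a given irreducible neighbourhood of $x$ to an irreducible \emph{open} neighbourhood $V$, using that nonempty open subsets of irreducible spaces are irreducible (\ref{1.10}\,C)). Then I would apply \cite[II.4.1 Proposition 7]{ac}: intersecting with the open set $V$ sends each irreducible component of $X$ that meets $V$ to an irreducible component of $V$. Since $V$ is irreducible, its only irreducible component is $V$ itself; hence every irreducible component of $X$ meeting $V$ contains $V$, and, since a nonempty open subset of an irreducible space is dense in it, such a component equals $\overline{V}$. So there is a \emph{unique} irreducible component $Z$ of $X$ meeting $V$. As $x\in V$, every irreducible component of $X$ through $x$ meets $V$ and therefore equals $Z$; thus $x$ lies in a unique irreducible component of $X$, and \ref{2.40} yields that $\mathscr{O}_{X,x}$ is irreducible.

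For the converse I would additionally assume $\irr(X)$ locally finite and $\mathscr{O}_{X,x}$ irreducible, so that \ref{2.40} provides a unique irreducible component $Z$ of $X$ containing $x$. I would then reuse the argument from the proof of ``(6)$\Rightarrow$(2)'' in \ref{1.20}: pick an open neighbourhood $U$ of $x$ meeting only finitely many irreducible components of $X$, let $Y$ be the union of those among them distinct from $Z$ -- a closed set not containing $x$ -- and set $W\dfgl U\setminus Y$. This is an open neighbourhood of $x$; any irreducible component of $X$ meeting $W$ meets $U$ but is disjoint from $Y$, hence is $Z$, so $W\subseteq Z$. Being a nonempty open subset of the irreducible space $Z$, the set $W$ is irreducible, so $x$ has an irreducible neighbourhood in $X$.

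The main point to watch is the forward direction: one must genuinely check that ``$V$ irreducible'' pins $x$ down to a single irreducible component of $X$, rather than to a family of components each of which merely meets $V$ -- this is where the density remark above does the work. Everything else -- shrinking neighbourhoods, taking finite unions of closed sets, restricting irreducible components to open subsets -- is routine, and the converse merely repackages the $(6)\Rightarrow(2)$ argument from \ref{1.20}.
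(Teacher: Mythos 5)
Your proof is correct and follows essentially the same route as the paper's: both directions reduce to Proposition~\ref{2.40}, and your converse excises the irreducible components other than the one through $x$ just as the paper does (the paper takes the locally finite union of \emph{all} components not containing $x$ and cites \cite[I.1.5 Proposition 4]{tg}, whereas you pass to a finite subfamily inside a small open set -- a cosmetic difference). Your forward direction merely unwinds the paper's one-line appeal to \ref{2.45}\,b), whose proof via \ref{1.20} rests on the same citation of \cite[II.4.1 Proposition 7]{ac} that you use.
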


\begin{proof}
The first statement follows immediately from \ref{2.45}\,b). Conversely, if $\irr(X)$ is locally finite and $\mathscr{O}_{X,x}$ is irreducible, then the union $Z$ of irreducible components of $X$ not containing $x$ is closed (\cite[I.1.5 Proposition 4]{tg}), hence $X\setminus Z$ is an open neighbourhood of $x$. By \ref{2.40} it is contained in the unique irreducible component of $X$ containing $x$ and thus irreducible.
\end{proof}

\begin{no}
In the second statement of \ref{2.60} the hypothesis that $\irr(X)$ is locally finite cannot be omitted, as exemplified by spectra of products of infinitely many fields (\ref{2.75}\,A)).
\end{no}


\section{Integrity of schemes}

Finally, we combine the above results with reducedness (whose global, local and pointwise variants are all equivalent) to obtain similar results on integrity, local integrity and pointwise integrity.

\begin{no}\label{3.10}
A scheme $X$ is called \textit{integral, locally integral,} or \textit{pointwise integral} if it is reduced and moreover irreducible, locally irreducible, or pointwise irreducible, respectively. It is locally integral if and only if every point of $X$ has an integral neighbourhood, or -- equivalently -- an integral open neighbourhood; it is pointwise integral if the ring $\mathscr{O}_{X,x}$ is integral for every $x\in X$. Integral schemes are locally integral, and locally integral schemes are pointwise integral (\ref{2.45}\,b)).
\end{no}

\begin{no}
In \cite[I.2.1.8]{ega} a scheme is defined to be integral or locally integral as above. In \cite[IV.4.6.9]{ega} a scheme of finite type over a field is (implicitly) defined to be pointwise integral as above, but no definition for more general schemes appears in \cite{ega}.
\end{no}

\begin{prop}\label{3.20}
A scheme $X$ is irreducible, locally irreducible, or pointwise irreducible if and only if $X_{\mathrm{red}}$ is integral, locally integral, or pointwise integral, respectively; moreover, $\irr(X)=\irr(X_{\mathrm{red}})$.
\end{prop}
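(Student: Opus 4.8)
The plan is to reduce everything to the already-established topological and pointwise statements by using the fundamental fact that the reduction morphism $X_{\mathrm{red}}\to X$ is a homeomorphism on underlying spaces. Since irreducibility and local irreducibility are purely topological, and $X$ and $X_{\mathrm{red}}$ have the same underlying space, the equivalences ``$X$ irreducible $\Leftrightarrow$ $X_{\mathrm{red}}$ irreducible'' and ``$X$ locally irreducible $\Leftrightarrow$ $X_{\mathrm{red}}$ locally irreducible'' are immediate, as is $\irr(X)=\irr(X_{\mathrm{red}})$. Then, because $X_{\mathrm{red}}$ is by construction reduced, ``$X_{\mathrm{red}}$ integral'' means precisely ``$X_{\mathrm{red}}$ reduced and irreducible'', i.e.\ ``$X_{\mathrm{red}}$ irreducible'', and similarly for local integrity; combining with the previous sentence gives the first two cases.

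For the pointwise case I would argue locally, reducing to the affine situation $X=\spec(R)$, where $X_{\mathrm{red}}=\spec(R/\mathfrak{n})$ with $\mathfrak{n}$ the nilradical of $R$. The key ring-theoretic input is that for a prime $\mathfrak{p}$ of $R$ one has $(R/\mathfrak{n})_{\mathfrak{p}/\mathfrak{n}}\cong R_{\mathfrak{p}}/\mathfrak{n}R_{\mathfrak{p}}$ and that $\mathfrak{n}R_{\mathfrak{p}}$ is the nilradical of $R_{\mathfrak{p}}$ (localisation commutes with taking the nilradical); hence the stalk of the structure sheaf of $X_{\mathrm{red}}$ at the point corresponding to $\mathfrak{p}$ is canonically $(\mathscr{O}_{X,\mathfrak{p}})_{\mathrm{red}}$. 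Since a ring and its reduction have the same spectrum, $\mathscr{O}_{X,\mathfrak{p}}$ is irreducible if and only if $(\mathscr{O}_{X,\mathfrak{p}})_{\mathrm{red}}$ is, and the latter is integral (being reduced) exactly when it is irreducible. Running this over all points yields: $X$ pointwise irreducible $\Leftrightarrow$ $X_{\mathrm{red}}$ pointwise irreducible $\Leftrightarrow$ $X_{\mathrm{red}}$ pointwise integral.

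The main obstacle, such as it is, is the bookkeeping identifying the stalks of $\mathscr{O}_{X_{\mathrm{red}}}$ with $(\mathscr{O}_{X,x})_{\mathrm{red}}$ in a way that is compatible with the homeomorphism of spaces; this is standard but should be stated cleanly, perhaps by citing the relevant construction of $X_{\mathrm{red}}$ in \cite{ega}. Once that identification is in place, the proof is a short chain of ``same space, same spectrum, reduced $+$ irreducible $=$ integral'' observations. I would present it as: first dispose of $\irr(X)=\irr(X_{\mathrm{red}})$ and the two topological cases via the homeomorphism, then handle the pointwise case via the stalk identification, and finally note that in all three cases the ``$X_{\mathrm{red}}$ is $\dots$ integral'' assertion is just the conjunction of reducedness of $X_{\mathrm{red}}$ (automatic) with the respective irreducibility, which has already been matched to the irreducibility of $X$.
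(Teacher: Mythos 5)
Your proposal is correct and follows essentially the same route as the paper: both arguments rest on the equality of underlying spaces (hence of irreducible components) together with the stalk identification $\mathscr{O}_{X_{\mathrm{red}},x}=(\mathscr{O}_{X,x})_{\mathrm{red}}$ and the observation that for the reduced scheme $X_{\mathrm{red}}$ integrality in each variant is just the corresponding irreducibility. The paper merely states these facts more tersely, without spelling out the affine verification of the stalk identification.
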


\begin{proof}
The underlying topological spaces of $X$ and $X_{\mathrm{red}}$ are equal, hence they have the same irreducible components, and $X_{\mathrm{red}}$ is reduced. Therefore, $X$ is (locally) irreducible if and only if $X_{\mathrm{red}}$ is (locally) integral. If $x\in X$ then $\mathscr{O}_{X_{\mathrm{red}},x}=(\mathscr{O}_{X,x})_{\mathrm{red}}$ is reduced, hence $X$ is pointwise irreducible if and only if $X_{\mathrm{red}}$ is pointwise integral.
\end{proof}

\begin{prop}\label{3.30}
Let $X$ be a scheme.

a) The following statements are equivalent: (i) $X$ is locally integral; (ii) $X$ is pointwise integral and\/ $\irr(X)$ is locally finite.

b) The following statements are equivalent: (i) $X$ is integral; (ii) $X$ is nonempty, connected and locally integral; (iii) $X$ is nonempty, connected and pointwise integral and\/ $\irr(X)$ is locally finite.
\end{prop}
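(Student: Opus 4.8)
The plan is to deduce both parts by simply conjoining the (global) reducedness hypothesis to the purely topological equivalences already established. Indeed, by the definitions recalled in \ref{3.10}, each of ``integral'', ``locally integral'' and ``pointwise integral'' means ``reduced and moreover irreducible, locally irreducible, or pointwise irreducible'' respectively, with the \emph{same} scheme-theoretic reducedness in all three cases; so everything should follow formally from \ref{2.70} and \ref{1.35} by adding the clause ``$X$ is reduced'' to both sides of each equivalence.

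For a) I would argue as follows. If $X$ is locally integral, then by definition it is reduced and locally irreducible, so by \ref{2.70}\,a) it is pointwise irreducible and $\irr(X)$ is locally finite; being reduced and pointwise irreducible it is pointwise integral, which gives (i)$\Rightarrow$(ii). Conversely, if $X$ is pointwise integral and $\irr(X)$ is locally finite, then $X$ is reduced and pointwise irreducible, hence locally irreducible by \ref{2.70}\,a), hence locally integral, giving (ii)$\Rightarrow$(i). In short, a) is exactly \ref{2.70}\,a) with ``$X$ is reduced'' adjoined on both sides.

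For b) I would chain the implications (i)$\Rightarrow$(ii)$\Rightarrow$(iii)$\Rightarrow$(i). For (i)$\Rightarrow$(ii): an integral scheme is reduced and irreducible, hence nonempty and connected (irreducible spaces being nonempty and connected), and it is locally integral by \ref{3.10}. For (ii)$\Rightarrow$(iii): part a) turns ``locally integral'' into ``pointwise integral with $\irr(X)$ locally finite'', while ``nonempty'' and ``connected'' are retained verbatim. For (iii)$\Rightarrow$(i): if $X$ is nonempty, connected, pointwise integral and $\irr(X)$ is locally finite, then $X$ is reduced and, by \ref{2.70}\,b), irreducible, hence integral. Equivalently, (i)$\Leftrightarrow$(iii) is \ref{2.70}\,b) with ``$X$ is reduced'' adjoined on both sides, and (i)$\Leftrightarrow$(ii) is \ref{1.35} with ``$X$ is reduced'' adjoined.

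I do not expect any genuine obstacle here: the content is entirely absorbed by the already proven Corollaries \ref{2.70} and \ref{1.35} together with \ref{2.45}\,b) and the definitions in \ref{3.10}. The only point needing a moment's care is to make sure the \emph{global} reducedness hypothesis appears on both sides of each equivalence — in particular when unwinding ``pointwise integral'' as ``reduced and pointwise irreducible'' — which is precisely how that notion is set up in \ref{3.10}, so no appeal to the equivalence of the pointwise and global variants of reducedness is needed beyond what is already built into those definitions.
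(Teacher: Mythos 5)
Your proposal is correct and is essentially the paper's own argument: the paper proves \ref{3.30} with the single line ``Immediately from \ref{2.70}'', which is exactly your reduction of adjoining the global reducedness clause of \ref{3.10} to both sides of the equivalences in \ref{2.70}. Your write-up merely makes explicit the bookkeeping that the paper leaves implicit.
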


\begin{proof}
Immediately from \ref{2.70}.
\end{proof}

\begin{no}
It follows from \ref{3.20} and \ref{2.75} that none of the conditions in \ref{3.30} can be omitted.
\end{no}


\smallskip\noindent\textbf{Acknowledgement:} I thank the referee for his careful reading and his remarks.


\end{document}